\documentclass[11pt]{article}
\usepackage{amssymb,amsmath,amsthm}
\usepackage[nobysame]{amsrefs}
\usepackage{tikz}

\newtheorem{theorem}{Theorem}[section]
\newtheorem{lemma}[theorem]{Lemma}

\newtheorem{corollary}[theorem]{Corollary}

\DeclareMathOperator{\diadem}{diadem}
\DeclareMathOperator{\core}{core}
\DeclareMathOperator{\corona}{corona}
\DeclareMathOperator{\nucleus}{nucleus}
\newcommand{\ke}{K\"{o}nig-Egerv\'{a}ry }

\title{On some conjectures concerning critical independent sets of a graph}
\author{Taylor Short\thanks{Supported in part by the NSF DMS under contract 1300547.}\\Department of Mathematics\\University of South Carolina\\shorttm2@mailbox.sc.edu}
\date{}

\begin{document}
\maketitle

\begin{abstract}
Let $G$ be a simple graph with vertex set $V(G)$. A set $S\subseteq V(G)$ is independent if no two vertices from $S$ are adjacent. For $X\subseteq V(G)$, the difference of $X$ is $d(X) = |X|-|N(X)|$ and an independent set $A$ is critical if $d(A) = \max \{d(X): X\subseteq V(G) \text{ is an independent set}\}$ (possibly $A=\emptyset$). Let $\nucleus (G)$ and $\diadem (G)$ be the intersection and union, respectively, of all maximum size critical independent sets in $G$. In this paper, we will give two new characterizations of \ke graphs involving $\nucleus (G)$ and $\diadem (G)$. We also prove a related lower bound for the independence number of a graph. This work answers several conjectures posed by Jarden, Levit, and Mandrescu.

{\bf Keywords:} maximum independent set, maximum critical independent set, \ke graph, maximum matching, core, corona, ker, diadem, nucleus.
\end{abstract}

\section{Introduction}

In this paper $G$ is a simple graph with vertex set $V(G)$, $|V(G)|=n$, and edge set $E(G)$. The set of neighbors of a vertex $v$ is $N_G(v)$ or simply $N(v)$ if there is no possibility of ambiguity. If $X\subseteq V(G)$, then the set of neighbors of $X$ is $N(X) = \cup _{u\in X} N(u)$, $G[X]$ is the subgraph induced by $X$, and $X^c$ is the complement of the subset $X$. For sets $A,B\subseteq V(G)$, we use $A\setminus B$ to denote the vertices belonging to $A$ but not $B$. For such disjoint $A$ and $B$ we let $(A,B)$ denote the set of edges such that each edge is incident to both a vertex in $A$ and a vertex in $B$.

A \emph{matching} $M$ is a set of pairwise non-incident edges of $G$. A matching of maximum cardinality is a \emph{maximum matching} and $\mu (G)$ is the cardinality of such a maximum matching. For a set $A\subseteq V(G)$ and matching $M$, we say $A$ is \emph{saturated} by $M$ if every vertex of $A$ is incident to an edge in $M$. For two disjoint sets $A,B\subseteq V(G)$, we say there is a matching $M$ of $A$ into $B$ if $M$ is a matching of $G$ such that every edge of $M$ belongs to $(A,B)$ and each vertex of $A$ is saturated. An $M$\emph{-alternating path} is a path that alternates between edges in $M$ and those not in $M$. An $M$\emph{-augmenting path} is an $M$-alternating path which begins and ends with an edge not in $M$.

A set $S\subseteq V(G)$ is \emph{independent} if no two vertices from $S$ are adjacent. An independent set of maximum cardinality is a \emph{maximum independent set} and $\alpha (G)$ is the cardinality of such a maximum independent set. For a graph $G$, let $\Omega (G)$ denote the family of all its maximum independent sets, let 
$$
\core (G) = \bigcap \{S: S\in \Omega (G)\}, \hspace{.5cm} \text{ and} \hspace{.5cm} \corona (G) = \bigcup \{S: S \in \Omega (G)\}.
$$
\noindent See \cite{levit4, boros, short} for background and properties of $\core (G)$ and $\corona (G)$. 

For a graph $G$ and a set $X\subseteq V(G)$, the \emph{difference} of $X$ is $d(X) = |X|-|N(X)|$ and the \emph{critical difference} $d(G)$ is $\max \{d(X): X\subseteq V(G)\}$. Zhang \cite{zhang} showed that $\max \{d(X): X\subseteq V(G)\} = \max \{d(S): S\subseteq V(G) \text{ is an independent set}\}$. The set $X$ is a \emph{critical set} if $d(X) = d(G)$. The set $S\subseteq V(G)$ a \emph{critical independent set} if $S$ is both a critical set and independent. A critical independent set of maximum cardinality is called a \emph{maximum critical independent set}. Note that for some graphs the empty set is the only critical independent set, for example odd cycles or complete graphs. See \cite{zhang,butenko,larson2,larson} for more background and properties of critical independent sets.

Finding a maximum independent set is a well-known {\bf NP}-hard problem. Zhang \cite{zhang} first showed that a critical independent set can be found in polynomial time. Butenko and Trukhanov \cite{butenko} showed that every critical independent set is contained in a maximum independent set, thereby directly connecting the problem of finding a critical independent set to that of finding a maximum independent set. 

For a graph $G$ the inequality $\alpha (G) + \mu (G) \le n$ always holds. A graph $G$ is a \emph{\ke graph} if $\alpha (G) + \mu (G) = n$. All bipartite graphs are \ke but there are non-bipartite graphs which are \ke as well, see Figure \ref{ex1} for an example. We adopt the convention that the empty graph $K_0$, without vertices, is a \ke graph. In \cite{larson} it was shown that \ke graphs are closely related to critical independent sets.

\begin{theorem}\cite{larson} \label{kemaxcrit}
A graph $G$ is \ke if, and only if, every maximum independent set in $G$ is critical.
\end{theorem}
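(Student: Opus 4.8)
The plan is to sandwich the critical difference $d(G)$ between two quantities and to read off the theorem from the situation in which the sandwich collapses. First I would record the elementary observation that if $S$ is any maximal (in particular, maximum) independent set then every vertex outside $S$ has a neighbor in $S$, so $N(S) = V(G)\setminus S$ and hence $d(S) = |S| - |N(S)| = 2\alpha(G) - n$. In particular all maximum independent sets share the same difference $2\alpha(G) - n$, so the statement ``every maximum independent set is critical'' is equivalent to the single numerical identity $d(G) = 2\alpha(G) - n$. Since a maximum independent set is itself an admissible set in the maximization defining $d(G)$, we always have $d(G) \ge 2\alpha(G) - n$.

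Next I would prove the complementary upper bound $d(G) \le n - 2\mu(G)$. Fix an independent set $X$ achieving $d(G)$, put $Y = N(X)$, and fix a maximum matching $M$. Because $X$ is independent, every $M$-saturated vertex of $X$ is matched to a distinct vertex of $Y$; thus $|Y| \ge |X| - k$, where $k$ is the number of $M$-unsaturated vertices of $X$. As $M$ leaves exactly $n - 2\mu(G)$ vertices unsaturated in all of $G$, we have $k \le n - 2\mu(G)$, giving $d(G) = |X| - |Y| \le n - 2\mu(G)$. Combining the two bounds yields $2\alpha(G) - n \le d(G) \le n - 2\mu(G)$ for every graph, which in passing re-proves $\alpha(G) + \mu(G) \le n$.

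The forward direction is then immediate: if $G$ is \ke then $\mu(G) = n - \alpha(G)$, so $n - 2\mu(G) = 2\alpha(G) - n$ and the sandwich forces $d(G) = 2\alpha(G) - n$; by the first paragraph every maximum independent set is critical. For the converse I cannot merely run the inequalities backwards, since $d(G) = 2\alpha(G) - n$ only recovers the always-true inequality $\alpha(G) + \mu(G) \le n$. Instead I would establish the key matching lemma: if $A$ is a critical independent set with $B = N(A)$, then $G$ admits a matching of $B$ into $A$. This I would verify through Hall's condition on the bipartite graph of edges between $A$ and $B$: were some $T \subseteq B$ to satisfy $|N(T)\cap A| < |T|$, then deleting from $A$ the vertices adjacent to $T$ would produce an independent set $A' = A \setminus N(T)$ with $N(A') \subseteq B \setminus T$, whence $d(A') \ge d(A) + (|T| - |N(T)\cap A|) > d(G)$, contradicting the maximality of $d(G)$.

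With the lemma in hand the converse follows: assuming every maximum independent set is critical, pick a maximum independent set $S$, which is critical and satisfies $N(S) = V(G)\setminus S$. The lemma supplies a matching of $V(G)\setminus S$ into $S$; this matching has $|V(G)\setminus S| = n - \alpha(G)$ edges, so $\mu(G) \ge n - \alpha(G)$, and together with $\mu(G) \le n - \alpha(G)$ we conclude $\alpha(G) + \mu(G) = n$, i.e.\ $G$ is \ke. I expect the Hall-condition lemma to be the main obstacle: both inequalities of the sandwich are short counting arguments, but the converse genuinely requires manufacturing a matching out of the criticality hypothesis, and the care lies in choosing the set $A'$ so that its neighborhood avoids $T$ while its difference strictly increases.
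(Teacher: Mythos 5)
Your proof is correct, but there is nothing in the paper to compare it against: Theorem \ref{kemaxcrit} is stated as a citation to Larson and used as a black box, with no proof reproduced in the text. Judged on its own merits, your argument is sound and self-contained. The reduction of ``every maximum independent set is critical'' to the single identity $d(G)=2\alpha(G)-n$ is valid, since any maximal independent set $S$ satisfies $N(S)=V(G)\setminus S$; the upper bound $d(G)\le n-2\mu(G)$ is a correct counting argument (independence of $X$ is what forces the $M$-partners of saturated vertices of $X$ to be distinct vertices of $N(X)$, and Zhang's theorem, cited in the paper, justifies taking $X$ independent); and your Hall-condition lemma is proved correctly: a violator $T\subseteq N(A)$ with $|N(T)\cap A|<|T|$ yields the independent set $A'=A\setminus N(T)$ with $N(A')\subseteq N(A)\setminus T$, whence $d(A')\ge d(A)+|T|-|N(T)\cap A|>d(G)$, a contradiction. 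The resulting matching of $V(G)\setminus S$ into $S$ for a critical maximum independent set $S$ then gives $\mu(G)\ge n-\alpha(G)$, closing the converse. Worth noting: your matching lemma (every critical independent set $A$ admits a matching of $N(A)$ into $A$) is essentially the engine behind the cited literature as well --- it is the same structural fact that underlies Theorem \ref{indepdecomp} and the Butenko--Trukhanov containment result --- so your route is close in spirit to the original source, but packaged as a short sandwich argument $2\alpha(G)-n\le d(G)\le n-2\mu(G)$ plus Hall's theorem, which is arguably more elementary than chasing the decomposition machinery.
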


\begin{theorem}\cite{larson}\label{indepdecomp}
For any graph $G$, there is a unique set $X \subseteq V(G)$ such that all of the following hold:

$(i)$ $\alpha (G) = \alpha (G[X]) + \alpha (G[X^c])$,\\
\indent $(ii)$ $G[X]$ is a \ke graph,\\
\indent $(iii)$ for every non-empty independent set $S$ in $G[X^c]$, $|N(S)|\ge |S|,$ and\\
\indent $(iv)$ for every maximum critical indendent set $I$ of $G$, $X=I\cup N(I)$.
\end{theorem}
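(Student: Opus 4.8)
The plan is to fix a maximum critical independent set $I$, set $X = I \cup N(I)$, verify $(i)$–$(iii)$ for this $X$, and then prove the decisive fact that $I \cup N(I)$ does not depend on the choice of $I$; this simultaneously yields existence, condition $(iv)$, and uniqueness.

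First I would establish the matching backbone of $(ii)$. I claim $N(I)$ can be matched into $I$. If not, the defect form of Hall's theorem supplies $T \subseteq N(I)$ with $|N(T)\cap I| < |T|$; setting $S = I \setminus N(T)$ gives $N(S) \subseteq N(I)\setminus T$, so $d(S) \ge d(I) + (|T| - |N(T)\cap I|) > d(G)$, contradicting the maximality of $d(G)$. Since $N(I)$ is a vertex cover of $G[X]$, this matching is maximum, so $\mu(G[X]) = |N(I)|$; combined with $\alpha(G[X]) \ge |I|$ and the universal bound $\alpha + \mu \le |X|$ we get $\alpha(G[X]) = |I|$ and $\alpha(G[X]) + \mu(G[X]) = |X|$, i.e.\ $G[X]$ is \ke. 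For $(i)$, a maximum independent set $A_2$ of $G[X^c]$ has no edge to $I$ (because $N(I) \subseteq X$), so $I \cup A_2$ is independent and $\alpha(G) \ge |I| + \alpha(G[X^c]) = \alpha(G[X]) + \alpha(G[X^c])$; the reverse inequality is immediate from restricting a maximum independent set of $G$ to $X$ and $X^c$. For $(iii)$, given a nonempty independent $S$ in $G[X^c]$ the set $I \cup S$ is again independent and a short count gives $d(I\cup S) \ge d(G) + (|S| - |N_{G[X^c]}(S)|)$; were $|N_{G[X^c]}(S)| \le |S|$ we would obtain a critical independent set strictly larger than $I$, contradicting maximality, so $|N_{G[X^c]}(S)| \ge |S|$.

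The heart of the argument, and the part I expect to be hardest, is uniqueness together with $(iv)$: for any two maximum critical independent sets $I_1, I_2$ I must show $I_1 \cup N(I_1) = I_2 \cup N(I_2)$. The main tool I would develop is a supermodularity estimate for $d$ on independent sets. Since $N(A\cup B) = N(A)\cup N(B)$ and $N(A\cap B)\subseteq N(A)\cap N(B)$ for independent $A,B$, we have $|N(A\cup B)| + |N(A\cap B)| \le |N(A)| + |N(B)|$, whence $d(A\cup B) + d(A\cap B) \ge d(A) + d(B)$. Applying this to $I_1, I_2$ and using $d(\,\cdot\,)\le d(G)$ for every subset forces $d(I_1\cap I_2) = d(I_1\cup I_2) = d(G)$: the intersection is a critical independent set and the union is a critical set, with $N(I_1\cap I_2) = N(I_1)\cap N(I_2)$ and $N(I_1\cup I_2) = N(I_1)\cup N(I_2)$. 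From $d(I_1)=d(I_2)=d(G)$ and $|I_1|=|I_2|$ I also record the cardinality identities $|N(I_1)| = |N(I_2)|$ and hence $|X_1| = |X_2|$, where $X_j = I_j\cup N(I_j)$.

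It then suffices to prove the containment $X_2 \subseteq X_1$, since equal cardinalities upgrade it to equality. Equivalently I must show every maximum critical independent set $I_2$ satisfies $I_2 \cap X_1^c = \emptyset$ and $N(I_2) \subseteq X_1$. This is the genuine obstacle: the exchange bound from $(iii)$ alone only yields the consistent inequality $|N_{G[X_1^c]}(S)| \ge |S|$ for $S = I_2\cap X_1^c$ and does not by itself force $S=\emptyset$. To close it I would analyze the critical set $C = I_1\cup I_2$ and its closure $C\cup N(C) = X_1\cup X_2$, combining the matchings saturating $N(I_1)$ and $N(I_2)$ into $I_1$ and $I_2$ into an alternating-path argument on $G[X_1\cup X_2]$; the goal is to show $|X_1\cup X_2| \le |X_1|$, which with $X_1\subseteq X_1\cup X_2$ gives $X_1 = X_2$. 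Establishing this cardinality bound—equivalently, that the closure of the union of two maximum critical independent sets introduces no new vertices—is where the structural work concentrates. Once $X := I\cup N(I)$ is shown independent of $I$, existence follows from the verification of $(i)$–$(iii)$, condition $(iv)$ holds by construction, and uniqueness is immediate because any $X$ satisfying $(iv)$ equals this common value.
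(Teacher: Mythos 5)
The paper itself contains no proof of this theorem --- it is quoted from Larson's work --- so your proposal has to stand entirely on its own, and it does not: it has a gap at exactly the point you yourself identify as the heart of the matter. Everything up to that point is correct. The Hall-defect argument matching $N(I)$ into $I$ is sound (a violating set $T\subseteq N(I)$ yields $d(I\setminus N(T))>d(G)$), and your derivations of $(i)$, $(ii)$, $(iii)$ from it are fine; in fact your argument for $(iii)$ gives the strict inequality $|N_{G[X^c]}(S)|>|S|$ for nonempty independent $S\subseteq X^c$, since otherwise $I\cup S$ would be a critical independent set larger than $I$. The supermodularity inequality $d(A\cup B)+d(A\cap B)\ge d(A)+d(B)$ and its consequences for $I_1\cap I_2$ and $I_1\cup I_2$ are also correct. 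But the nonlocal content of the theorem --- that $I\cup N(I)$ is the \emph{same} set for every maximum critical independent set $I$, which is what yields $(iv)$ and uniqueness --- is precisely what you reduce to the claim $|X_1\cup X_2|\le|X_1|$ and then do not prove: ``an alternating-path argument on $G[X_1\cup X_2]$'' is an intention, not an argument. As written, you have only shown that \emph{some} $X$ satisfies $(i)$--$(iii)$.

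The gap is closeable with tools you already put on the table, and more directly than by alternating paths. Write $I_2=A\sqcup B\sqcup S$ with $A=I_1\cap I_2$, $B=I_2\cap N(I_1)$, $S=I_2\setminus X_1$. Your Hall matching of $N(I_1)$ into $I_1$, restricted to $B$, gives $|N(B)\cap I_1|\ge|B|$. The three sets $N(B)\cap I_1$, $N(A)$, and $N_{G[X_1^c]}(S)$ lie in $I_1$, $N(I_1)$, and $X_1^c$ respectively, so they are pairwise disjoint subsets of $N(I_2)$; counting gives $d(G)=d(I_2)\le d(A)+|S|-|N_{G[X_1^c]}(S)|\le d(G)+|S|-|N_{G[X_1^c]}(S)|$, hence $|N_{G[X_1^c]}(S)|\le|S|$, which by the strict form of $(iii)$ forces $S=\emptyset$, i.e.\ $I_2\subseteq X_1$ (and symmetrically $I_1\subseteq X_2$). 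For the neighborhoods, $I_1\subseteq X_2$ gives $I_1\setminus A\subseteq N(I_2)$, and then $(I_1\setminus A)\sqcup N(A)\sqcup\bigl(N(B)\cap X_1^c\bigr)\subseteq N(I_2)$ together with $|N(I_2)|=|N(I_1)|=|I_1|-d(G)$ yields $|N(B)\cap X_1^c|\le d(A)-d(G)\le 0$, so $N(I_2)\subseteq X_1$. Thus $X_2\subseteq X_1$, and by symmetry (or your cardinality identity $|X_1|=|X_2|$) the two sets are equal. Until some argument of this kind is actually carried out, the proposal proves existence of a decomposition but not the theorem.
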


\noindent Larson in \cite{larson2} showed that a maximum critical independent set can be found in polynomial time. So the decomposition in Theorem \ref{indepdecomp} of a graph $G$ into $X$ and $X^c$ is also computable in polynomial time. Figure \ref{ex2} gives an example of this decomposition, where both the sets $X$ and $X^c$ are non-empty. Recall, for some graphs the empty set is the only critical independent set, so for such graphs the set $X$ would be empty. If a graph $G$ is a \ke graph, then the set $X^c$ would be empty. We adopt the convention that if $K_0$ is empty graph, then $\alpha (K_0) = 0$. 

\begin{figure}[h]
\begin{center}
\begin{tikzpicture}[scale=1]
\draw (0,0) circle (2pt) [fill=black];
\draw (0,1) circle (2pt) [fill=black];
\draw (0,2) circle (2pt) [fill=black];
\draw (1,.5) circle (2pt) [fill=black];
\draw (1,1.5) circle (2pt) [fill=black];
\draw (2,.5) circle (2pt) [fill=black];
\draw (2,1.5) circle (2pt) [fill=black];
\draw (3,.5) circle (2pt) [fill=black];
\draw (3,1.5) circle (2pt) [fill=black];
\draw (4,1) circle (2pt) [fill=black];

\draw (0,0)--(1,.5);
\draw (0,0)--(1,1.5);
\draw (0,1)--(1,.5);
\draw (0,1)--(1,1.5);
\draw (0,2)--(1,.5);
\draw (0,2)--(1,1.5);
\draw (1,1.5)--(2,1.5);
\draw (1,.5)--(2,.5);
\draw (2,.5)--(2,1.5);
\draw (2,.5)--(3,1.5);
\draw (2,.5)--(3,.5);
\draw (3,1.5)--(3,.5);
\draw (2,1.5)--(3,.5);
\draw (4,1)--(3,.5);
\draw (2,1.5)--(3,1.5);
\draw (4,1)--(3,1.5);

\draw (2,-.5) node {$G$};
\draw (-.3,2) node {$a$};
\draw (-.3,1) node {$b$};
\draw (-.3,0) node {$c$};
\draw (1.2,1.8) node {$d$};
\draw (1.2,.3) node {$e$};
\draw (2,1.8) node {$f$};
\draw (3,1.8) node {$g$};
\draw (4.3,1) node {$h$};
\draw (3,.2) node {$i$};
\draw (2,.2) node {$j$};

\end{tikzpicture}
\end{center}
\caption{$G$ has maximum critical independent set $I = \{a,b,c\}$. Theorem \ref{indepdecomp} gives that $X = \{a,b,c,d,e\}$ and $X^c=\{f,g,h,i,j\}$.}
\label{ex2}
\end{figure}
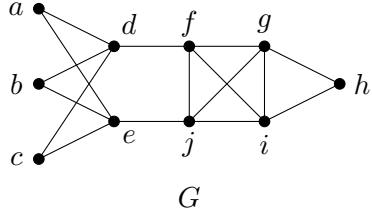

In \cite{levit7,levit2} the following concepts were introduced: for a graph $G$,
\begin{align*}
\ker (G) &= \bigcap \{S: S \text{ is a critical independent set in } G\},\\
\diadem (G) &= \bigcup \{S: S \text{ is a critical independent set in } G\}, \text{ and}\\
\nucleus (G) &= \bigcap \{S: S \text{ is a maximum critical independent set in } G\}.
\end{align*}

\noindent However, the following result due to Larson allows us to use a more suitable definition for $\diadem (G)$.

\begin{theorem}\cite{larson2} \label{larson}
Each critical independent set is contained in some maximum critical independent set.
\end{theorem}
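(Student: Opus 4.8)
The plan is to start from an arbitrary critical independent set $A$ together with a maximum critical independent set $B$ (such a $B$ exists by Zhang's observation, recalled in the introduction, that the maximum value of $d$ is attained on an independent set), and to manufacture from them a maximum critical independent set containing $A$. The engine will be two facts about the difference function. First, $d$ is supermodular: since $N(X\cup Y)=N(X)\cup N(Y)$ and $N(X\cap Y)\subseteq N(X)\cap N(Y)$, a short count gives $d(X\cup Y)+d(X\cap Y)\ge d(X)+d(Y)$ for all $X,Y\subseteq V(G)$. Because $d(G)$ is the maximum value of $d$, applying this with $X=A$ and $Y=B$ forces $d(A\cup B)=d(A\cap B)=d(G)$; in particular $A\cup B$ is a critical set, though it need not be independent. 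Second, I will prove a lemma that every critical independent set $A$ admits a matching of $N(A)$ into $A$: if not, Hall's defect condition yields $W\subseteq N(A)$ with $|N(W)\cap A|<|W|$, and then $A\setminus N(W)$ is independent with $d(A\setminus N(W))>d(A)=d(G)$, contradicting maximality.

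Now the main construction. Since $A$ and $B$ are independent, every edge inside $G[A\cup B]$ joins $P=A\setminus B$ to $Q=B\setminus A$; let $Q_0=N(A)\cap B$ be the vertices of $B$ that conflict with $A$. I will set $D=(A\cup B)\setminus Q_0$, which is exactly $A$ together with the vertices of $B$ lying outside $A\cup N(A)$. Then $D$ is independent and contains $A$ by construction. It remains to show that $D$ is critical and that $|D|\ge|B|$, for then $D$ is a critical independent set of maximum size containing $A$, which is what is wanted.

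Both remaining points will follow from a single bookkeeping identity: for $S\subseteq Y$ one has $d(Y\setminus S)=d(Y)-|S|+|\mathrm{priv}_Y(S)|$, where $\mathrm{priv}_Y(S)$ denotes the set of vertices of $N(Y)$ all of whose neighbors in $Y$ lie in $S$. Applying this with $Y=A\cup B$ and $S=Q_0$, the inequality $d(D)\le d(G)=d(A\cup B)$ immediately gives $|\mathrm{priv}_{A\cup B}(Q_0)|\le|Q_0|$. For the reverse inequality I will use the matching lemma: a matching of $N(A)$ into $A$ sends each vertex of $Q_0\subseteq N(A)$ to a distinct partner, and each such partner must lie in $P$ (it cannot lie in $A\cap B$, since that would create an edge inside $B$) and has all of its $(A\cup B)$-neighbors inside $Q_0$, hence lies in $\mathrm{priv}_{A\cup B}(Q_0)$. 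This yields $|\mathrm{priv}_{A\cup B}(Q_0)|\ge|Q_0|$, so the two are equal and $d(D)=d(G)$; the very same injection $Q_0\hookrightarrow P$ shows $|Q_0|\le|P|$, whence $|D|=|B|+(|P|-|Q_0|)\ge|B|$.

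I expect the main obstacle to be precisely the step of restoring independence without losing the maximum difference. Passing to $A\cup B$ is forced by supermodularity but destroys independence, and the delicate point is to verify that deleting exactly the conflict set $Q_0$ neither lowers the difference below $d(G)$ nor shrinks the set below $|B|$. The identity $|\mathrm{priv}_{A\cup B}(Q_0)|=|Q_0|$ is what makes both happen at once, and establishing its two inequalities is where the supermodularity (for the upper bound) and the matching lemma (for the lower bound) must be combined; getting the direction of each matching argument right is the part I would check most carefully.
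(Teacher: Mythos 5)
The paper does not actually prove this statement: Theorem \ref{larson} is quoted with a citation to Larson's paper, and no argument for it appears anywhere in the text. So there is no in-paper proof to compare against; what can be done is to check your argument on its own terms, and it checks out. The supermodularity inequality $d(X\cup Y)+d(X\cap Y)\ge d(X)+d(Y)$ is correct (it follows from $N(X\cup Y)=N(X)\cup N(Y)$ and $N(X\cap Y)\subseteq N(X)\cap N(Y)$), and since $d(G)$ bounds both terms on the left, it does force $d(A\cup B)=d(G)$. Your Hall-type lemma is also right: a defect set $W\subseteq N(A)$ would make $A\setminus N(W)$ an independent set with $d(A\setminus N(W))\ge d(A)+|W|-|N(W)\cap A|>d(G)$, a contradiction, so a matching of $N(A)$ into $A$ exists. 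The one place I scrutinized hardest is the identity $d(Y\setminus S)=d(Y)-|S|+|\mathrm{priv}_Y(S)|$ applied to $Y=A\cup B$, which is \emph{not} independent, so $N(Y)$ may intersect $Y$; but the identity is purely set-theoretic, resting on $N(Y)\setminus N(Y\setminus S)=\mathrm{priv}_Y(S)$, and needs no disjointness, so this is fine. Finally, the matched partner of each $q\in Q_0$ lies in $P$ (else $B$ would contain an edge) and has all its $(A\cup B)$-neighbors in $N(A)\cap B=Q_0$, so it lies in $\mathrm{priv}_{A\cup B}(Q_0)$; this gives both $|\mathrm{priv}_{A\cup B}(Q_0)|\ge |Q_0|$, hence $d(D)=d(G)$, and $|Q_0|\le |P|$, hence $|D|\ge |B|$, and the theorem follows. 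Note that your set $D=(A\cup B)\setminus Q_0$ is just $A\cup(B\setminus N(A))$, the natural ``swap'' extension that also underlies the Butenko--Trukhanov and Larson arguments in the cited literature; what your organization buys is that the two facts needed about $D$ (criticality and size) drop out of a single counting identity, with supermodularity supplying one inequality and the matching lemma the other, making the whole proof self-contained and mechanical to verify.
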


\noindent For the remainder of this paper we define 
$$
\diadem (G) = \bigcup \{S: S \text{ is a maximum critical independent set in } G\}.
$$
Note that if $G$ is a graph where the empty set is the only critical indepedent set (including the case $G = K_0$, the empty graph), then $\ker (G), \diadem (G)$, and $\nucleus (G)$ are all empty. See Figure \ref{ex1} for examples of the sets $\ker (G)$, $\diadem (G)$, and $\nucleus (G)$.

\begin{figure}[h]
\begin{center}
\begin{tikzpicture}[scale=1]
\draw (0,0) circle (2pt) [fill=black];
\draw (0,1) circle (2pt) [fill=black];
\draw (0,2) circle (2pt) [fill=black];
\draw (0,3) circle (2pt) [fill=black];
\draw (1,.5) circle (2pt) [fill=black];
\draw (1,1.5) circle (2pt) [fill=black];
\draw (1,2.5) circle (2pt) [fill=black];

\draw (0,3)--(1,2.5);
\draw (0,2)--(1,2.5);
\draw (0,1)--(1,2.5);
\draw (1,1.5)--(1,.5);
\draw (0,1)--(1,2.5);
\draw (0,1)--(1,1.5);
\draw (0,1)--(1,.5);
\draw (0,0)--(1,.5);

\draw (0.5,-1) node {$G_1$};
\draw (-.3,3) node {$a$};
\draw (-.3,2) node {$b$};
\draw (-.3,1) node {$c$};
\draw (-.3,0) node {$d$};
\draw (1.3,2.5) node {$e$};
\draw (1.3,1.5) node {$f$};
\draw (1.3,.5) node {$g$};

\draw (4,0) circle (2pt) [fill=black];
\draw (4,1) circle (2pt) [fill=black];
\draw (4,2) circle (2pt) [fill=black];
\draw (4,3) circle (2pt) [fill=black];
\draw (5,0.5) circle (2pt) [fill=black];
\draw (5,1.5) circle (2pt) [fill=black];
\draw (5,2.5) circle (2pt) [fill=black];
\draw (6,1) circle (2pt) [fill=black];
\draw (6,2) circle (2pt) [fill=black];
\draw (7,1.5) circle (2pt) [fill=black];

\draw (4,2)--(5,2.5);
\draw (4,1)--(5,2.5);
\draw (4,3)--(5,2.5);
\draw (5,1.5)--(5,.5);
\draw (4,1)--(5,1.5);
\draw (4,0)--(5,.5);
\draw (5,2.5)--(6,2);
\draw (5,.5)--(6,1);
\draw (6,2)--(6,1);
\draw (6,2)--(7,1.5);
\draw (6,1)--(7,1.5);

\draw (5,-1) node {$G_2$};
\draw (3.7,3) node {$a$};
\draw (3.7,2) node {$b$};
\draw (3.7,1) node {$c$};
\draw (3.7,0) node {$d$};
\draw (5.3,2.7) node {$e$};
\draw (5.3,1.5) node {$f$};
\draw (5.3,.3) node {$g$};
\draw (6.2,2.3) node {$h$};
\draw (6.2,.8) node {$i$};
\draw (7.3,1.5) node {$j$};

\end{tikzpicture}
\end{center}
\caption{$G_1$ is a \ke graph with $\ker(G_1) = \{a,b\} \subsetneq \core (G_1) = \nucleus (G_1) = \{a,b,d\}$ and $\diadem (G_1) = \corona (G_1) = \{a,b,c,d,f\}$. $G_2$ is not a \ke graph and has $\ker (G_2) = \core (G_2) = \{a,b\} \subsetneq \nucleus (G_2) = \{a,b,d\}$ and $\diadem (G_2) = \{a,b,c,d,f\} \subsetneq \corona (G) = \{a,b,c,d,f,g,h,i,j\}$.}
\label{ex1}
\end{figure}
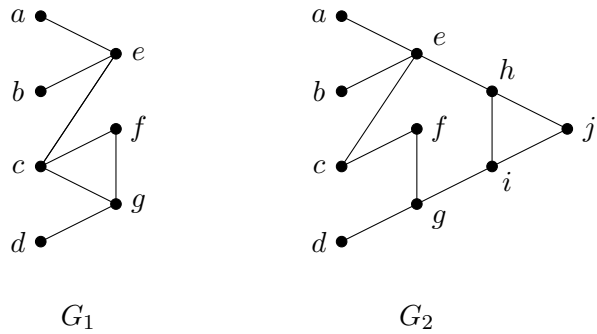

In \cite{levit1,levit2}, the following necessary conditions for \ke graphs were given:

\begin{theorem}\cite{levit1}\label{kediadem}
If $G$ is a \ke graph, then

$(i)$ $\diadem (G) = \corona (G)$, and

\indent $(ii)$ $|\ker (G)| + |\diadem (G)| \le 2\alpha (G)$.
\end{theorem}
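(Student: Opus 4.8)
The plan is to prove the two parts separately, with part $(i)$ feeding directly into part $(ii)$. The key preliminary observation for both is to pin down the cardinality of a maximum critical independent set in a \ke graph. By Theorem \ref{kemaxcrit}, every maximum independent set $S$ is critical; since $|S| = \alpha(G)$ and no independent set (hence no critical independent set) can exceed $\alpha(G)$ in size, the maximum critical independent sets are exactly the critical independent sets of cardinality $\alpha(G)$. This forces the family of maximum critical independent sets to coincide with $\Omega(G)$: a maximum independent set is critical of size $\alpha(G)$, hence a maximum critical independent set, while conversely a maximum critical independent set has size $\alpha(G)$ and is independent, hence lies in $\Omega(G)$. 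Taking unions over these two identical families immediately gives $\diadem(G) = \corona(G)$, which is part $(i)$.

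For part $(ii)$ I would first reduce to a cleaner inequality about $\core$ and $\corona$. Because every maximum independent set is a critical independent set, the family of critical independent sets contains $\Omega(G)$, and intersecting over the larger family yields $\ker(G) \subseteq \core(G)$, so $|\ker(G)| \le |\core(G)|$. Combined with $\diadem(G) = \corona(G)$ from part $(i)$, it suffices to prove
$$
|\core(G)| + |\corona(G)| \le 2\alpha(G).
$$

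The heart of the argument is a matching/injection estimate. Fix a maximum independent set $S$; then $V(G)\setminus S$ is a vertex cover of size $n-\alpha(G) = \mu(G)$, and a maximum matching $M$ has exactly $\mu(G)$ edges. Since $S$ is independent, every edge of $M$ has an endpoint in $V(G)\setminus S$, and a short double-counting argument (there are only $\mu(G)$ vertices available in $V(G)\setminus S$, matched to $\mu(G)$ disjoint edges) forces each edge of $M$ to have exactly one endpoint in $V(G)\setminus S$ and one in $S$. Thus $M$ saturates $V(G)\setminus S$ and matches it injectively into $S$. I would then define a map sending each $w \in \corona(G)\setminus S$ to its $M$-partner $m(w) \in S$. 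Injectivity is immediate from $M$ being a matching. To see that $m(w) \in S\setminus\core(G)$, choose a maximum independent set $S_w$ containing $w$ (possible since $w \in \corona(G)$); as $wm(w)$ is an edge and $S_w$ is independent, $m(w)\notin S_w$, so $m(w)$ is avoided by a maximum independent set and hence $m(w)\notin\core(G)$. This injection gives $|\corona(G)\setminus S| \le |S\setminus\core(G)|$, and using $\core(G)\subseteq S\subseteq\corona(G)$ this rearranges to $|\core(G)| + |\corona(G)| \le 2\alpha(G)$. Finally $|\ker(G)| + |\diadem(G)| = |\ker(G)| + |\corona(G)| \le |\core(G)| + |\corona(G)| \le 2\alpha(G)$ completes part $(ii)$.

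The step I would check most carefully is the existence and structure of the matching $M$ saturating $V(G)\setminus S$ into $S$: this is precisely where the \ke hypothesis enters, since $\alpha(G)+\mu(G)=n$ is exactly what makes the double-counting tight and rules out any matching edge lying entirely inside $V(G)\setminus S$. I would also take care that the target of the injection is $\core(G)$ rather than something coarser, which is what makes the bound strong enough. Everything else—the two containments in $(i)$, the inclusion $\ker(G)\subseteq\core(G)$, and the injectivity of $w\mapsto m(w)$—is formal once this matching is in hand.
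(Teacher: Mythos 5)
Your proof is correct, but there is nothing in the paper to compare it against: Theorem \ref{kediadem} is quoted from \cite{levit1} without proof, so your argument stands as a self-contained derivation of a cited result. Part $(i)$ is exactly right: in a \ke graph the family of maximum critical independent sets coincides with $\Omega(G)$ (Theorem \ref{kemaxcrit} together with the observation that no critical independent set can exceed size $\alpha(G)$), and taking unions gives $\diadem(G)=\corona(G)$; the same identification yields $\ker(G)\subseteq\core(G)$, as you use. For part $(ii)$ you in fact prove something stronger than what is asked: your matching argument --- that in a \ke graph a maximum matching $M$ must pair $V(G)\setminus S$ injectively into $S$ for any maximum independent set $S$ (the double counting is sound: each of the $\mu(G)$ disjoint edges of $M$ needs an endpoint among the $\mu(G)$ vertices of $V(G)\setminus S$, so each gets exactly one and all of $V(G)\setminus S$ is saturated), and that the partner $m(w)$ of each $w\in\corona(G)\setminus S$ is avoided by any maximum independent set containing $w$ --- establishes $|\core(G)|+|\corona(G)|\le 2\alpha(G)$ for \ke graphs. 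Combined with the reverse inequality $2\alpha(G)\le|\core(G)|+|\corona(G)|$, which \cite{levit1} proves for all graphs, this gives equality for \ke graphs; and since in a \ke graph $\nucleus(G)=\core(G)$ and $\diadem(G)=\corona(G)$ by the identification in part $(i)$, your intermediate inequality is essentially a re-proof of the cited Theorem \ref{kenucdiadem} as well. The only point worth flagging is definitional: the source \cite{levit1} defines $\diadem(G)$ as the union of all critical independent sets rather than all maximum ones, but Theorem \ref{larson} makes the two unions equal, so your part $(i)$ covers either reading.
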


\begin{theorem}\cite{levit2} \label{kenucdiadem}
If $G$ is a \ke graph, then $|\nucleus (G)| + | \diadem (G)| = 2\alpha (G)$.
\end{theorem}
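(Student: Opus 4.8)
The plan is to reduce the statement to the identity $|\core(G)| + |\corona(G)| = 2\alpha(G)$ for \ke graphs and to prove that identity by a matching-based bijection. The first step is to observe that in a \ke graph the maximum critical independent sets are exactly the maximum independent sets. Indeed, a critical independent set is independent and so has size at most $\alpha(G)$; by Theorem~\ref{kemaxcrit} every member of $\Omega(G)$ is critical, hence is a critical independent set of the largest possible size $\alpha(G)$, i.e.\ a maximum critical independent set. Conversely, a maximum critical independent set has size at least $\alpha(G)$ (since members of $\Omega(G)$ are critical independent sets) and at most $\alpha(G)$ (independence), so its size is exactly $\alpha(G)$ and it therefore lies in $\Omega(G)$. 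Consequently $\nucleus(G) = \core(G)$ and $\diadem(G) = \corona(G)$ (the latter recovering Theorem~\ref{kediadem}$(i)$), and it suffices to prove $|\core(G)| + |\corona(G)| = 2\alpha(G)$.

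Next I would fix a maximum independent set $S$ and a maximum matching $M$. Since $G$ is \ke, the set $V(G)\setminus S$ is a minimum vertex cover of size $\mu(G) = |M|$, and a short counting argument (the $\mu(G)$ pairwise disjoint edges of $M$ must all be covered by the $\mu(G)$ vertices of $V(G)\setminus S$) shows that every edge of $M$ has exactly one endpoint in $S$ and one in $V(G)\setminus S$, that every vertex of $V(G)\setminus S$ is $M$-saturated, and hence that every $M$-exposed vertex lies in $S$. The same statement holds verbatim with $S$ replaced by any $T\in\Omega(G)$; this crossing property is the engine of the whole argument.

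Because $\core(G)\subseteq S\subseteq \corona(G)$, the desired identity is equivalent to $|\corona(G)\setminus S| = |S\setminus\core(G)|$, so I would exhibit a bijection between these sets. Each vertex of $\corona(G)\setminus S$ lies in $V(G)\setminus S$ and is therefore $M$-saturated, and I define $\phi(v) = M(v)$. To see $\phi$ lands in $S\setminus\core(G)$: if $v\in\corona(G)$ then some $T\in\Omega(G)$ contains $v$, and since $vM(v)\in M$ crosses between $T$ and its complement, $M(v)\notin T$, so $M(v)\notin\core(G)$. The map $\phi$ is injective because $M$ is a matching. For surjectivity I would take $u\in S\setminus\core(G)$, fix $T\in\Omega(G)$ with $u\notin T$, and argue in two steps using the crossing property: first, $u\in V(G)\setminus T$, which is $M$-saturated, so $u$ is matched, say $v=M(u)$, where $v\in V(G)\setminus S$ by the crossing property for $S$; second, if $v\notin T$ then the $M$-edge $uv$ would have both endpoints in $V(G)\setminus T$, contradicting the crossing property for $T$, so $v\in T$ and thus $v\in\corona(G)\setminus S$ with $\phi(v)=u$.

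The main obstacle is this surjectivity, namely showing that each vertex of $S$ outside the core is matched by $M$ to a vertex of the corona; everything hinges on the crossing property holding simultaneously for $S$ and for an arbitrary maximum independent set $T$, which is precisely what transfers information about membership in $T$ into information about $\core(G)$ and $\corona(G)$. Once the bijection is in hand the counting $|\corona(G)| - \alpha(G) = \alpha(G) - |\core(G)|$ is immediate, and combining it with $\nucleus(G)=\core(G)$ and $\diadem(G)=\corona(G)$ from the first step yields $|\nucleus(G)| + |\diadem(G)| = 2\alpha(G)$.
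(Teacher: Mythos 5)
Your proposal is correct, but there is nothing in the paper to compare it against: Theorem~\ref{kenucdiadem} is an imported result, cited from the literature and used as a black box in the proofs of Theorem~\ref{keconjecture} and Theorem~\ref{conjecture1}; the paper never proves it. Your argument therefore supplies a self-contained proof of a prerequisite the paper takes on faith, and it is sound. The opening reduction --- in a \ke graph the maximum critical independent sets are exactly the maximum independent sets, so $\nucleus (G) = \core (G)$ and $\diadem (G) = \corona (G)$ --- is the same consequence of Theorem~\ref{kemaxcrit} that the paper itself exploits (for instance inside Lemma~\ref{diademX}, where maximum independent sets of $G[X]$ are identified as maximum critical independent sets), and it correctly converts the claim into the identity $|\core (G)| + |\corona (G)| = 2\alpha (G)$ for \ke graphs. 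Your proof of that identity is the standard matching-theoretic one: the counting that gives the crossing property (every edge of a maximum matching $M$ has exactly one endpoint in each $T\in \Omega (G)$, every vertex outside $T$ is $M$-saturated) is right, injectivity of $\phi (v) = M(v)$ is immediate, and the surjectivity step correctly plays the crossing property for $S$ against the crossing property for a witness $T$ with $u\notin T$. One minor gap in the write-up: when checking that $\phi$ lands in $S\setminus \core (G)$ you only verify $M(v)\notin \core (G)$; you should also note $M(v)\in S$, which follows in one line from the crossing property for $S$ since $v\notin S$. As for what each approach buys: the paper, by citing the result, keeps its focus on the new converse directions, while your proof makes the whole chain elementary and self-contained, resting only on the definition of \ke graphs and matching--vertex-cover duality rather than on the machinery of the cited reference.
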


\noindent In \cite{levit1} it was conjectured that condition $(i)$ of Theorem \ref{kediadem} is sufficient for \ke graphs and in \cite{levit2} it was conjectured the necessary condition in Theorem \ref{kenucdiadem} is also sufficient. The purpose of this paper is to affirm these conjectures by proving the following new characterizations of \ke graphs.

\begin{theorem} \label{keconjecture}
For a graph $G$, the following are equivalent:

$(i)$ $G$ is a \ke graph, \\
\indent $(ii)$ $\diadem (G) = \corona (G)$, and\\
\indent $(iii)$ $|\diadem (G)| + |\nucleus (G)| = 2\alpha (G)$.
\end{theorem}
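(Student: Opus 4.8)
The two implications (i)$\Rightarrow$(ii) and (i)$\Rightarrow$(iii) are already supplied by Theorems \ref{kediadem} and \ref{kenucdiadem}, so the plan is to prove the reverse directions, and the engine throughout will be the independence decomposition of Theorem \ref{indepdecomp}. Fix a maximum critical independent set $I$, write $X = I \cup N(I)$, set $H = G[X]$ (which is a \ke graph), and recall that $G$ is a \ke graph exactly when $X^c = \emptyset$. I would first record the standard consequence of criticality that $N(I)$ can be matched into $I$: if Hall's condition failed there would be $B \subseteq N(I)$ with $|N(B)\cap I| < |B|$, and deleting $N(B)\cap I$ from $I$ would strictly raise the difference, contradicting criticality. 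Since $H$ is a \ke graph and $I, N(I)$ partition $X$, this saturating matching pins down $\mu(H) = |N(I)|$ and $\alpha(H) = |I|$, so $I$ is a maximum independent set of $H$.

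The heart of the proof is the correct description of $\diadem(G)$ and $\nucleus(G)$, and here one must resist identifying them with $\corona(H)$ and $\core(H)$: a maximum independent set of $H$ may have a neighbour in $X^c$, at which point it is no longer critical in $G$. I would therefore introduce the boundary $Y = \{v \in X : N_G(v)\cap X^c \neq \emptyset\}$, noting $Y \subseteq N(I)$ because $I$ has no neighbours outside $X$. The key claim is twofold. First, the maximum critical independent sets of $G$ are precisely the maximum independent sets of $H - Y$: a maximum independent set $J$ of $H-Y$ has $|J|=\alpha(H)$ and, avoiding $Y$, has all its neighbours inside $X$, so $|N(J)|$ is the same in $G$ as in $H$ and hence $d(J)=d(H)=d(G)=|I|-|N(I)|$; conversely any maximum critical set $J$ satisfies $X = J\cup N(J)$, so $N_G(J)\subseteq X$, forcing $J\cap Y = \emptyset$. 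Second, $H - Y$ is itself a \ke graph: the matching of $N(I)$ into $I$ restricts to one of size $|N(I)|-|Y|$, so $\mu(H-Y)\ge |N(I)|-|Y|$, while $\alpha(H-Y)=|I|$ together with the universal bound $\alpha(F)+\mu(F)\le |V(F)|$ gives $\mu(H-Y)\le |N(I)|-|Y|$; equality yields $\alpha(H-Y)+\mu(H-Y)=|X|-|Y|=|V(H-Y)|$. Consequently $\diadem(G) = \diadem(H-Y)$ and $\nucleus(G) = \nucleus(H-Y)$.

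With this reduction the conclusions drop out. In the \ke graph $H-Y$ the maximum critical independent sets coincide with the maximum independent sets, so $\nucleus(H-Y)=\core(H-Y)$ and $\diadem(H-Y)=\corona(H-Y)$, and Theorem \ref{kenucdiadem} applied to $H-Y$ gives the master identity $|\diadem(G)| + |\nucleus(G)| = 2\alpha(H-Y) = 2\alpha(G[X])$, valid for every graph $G$. Since $\alpha(G) = \alpha(G[X]) + \alpha(G[X^c])$ and $\alpha(G[X^c])\ge 1$ precisely when $X^c\neq\emptyset$, this quantity equals $2\alpha(G)$ if and only if $X^c = \emptyset$, that is, if and only if $G$ is a \ke graph; this settles (i)$\Leftrightarrow$(iii). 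For (ii)$\Rightarrow$(i) I would argue contrapositively: if $X^c\neq\emptyset$, choose a maximum independent set $T$ of $G[X^c]$; as $I$ has no neighbour in $X^c$, $I\cup T$ is independent of size $\alpha(G[X])+\alpha(G[X^c])=\alpha(G)$, so $T\subseteq \corona(G)\cap X^c$, whereas $\diadem(G)\subseteq X$. Because $\diadem(G)\subseteq\corona(G)$ always, this yields $\diadem(G)\subsetneq\corona(G)$ and closes the cycle.

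The main obstacle, and the step deserving the most care, is the reduction in the second paragraph rather than the bookkeeping around it. The naive identification of $\diadem(G)$ with $\corona(G[X])$ is simply false: already a $C_4$ with one vertex joined to a triangle has $\diadem(G)\subsetneq\corona(G[X])$, since the maximum independent set of the $C_4$ meeting the triangle loses criticality in $G$. The whole argument therefore hinges on isolating the boundary $Y$ and verifying both that it exactly accounts for this loss of criticality and that its deletion preserves the \ke property. Once the lemma that $H-Y$ is a \ke graph with $\diadem(H-Y)=\diadem(G)$ and $\nucleus(H-Y)=\nucleus(G)$ is secured, the remainder is a direct appeal to the \ke theorems already in hand.
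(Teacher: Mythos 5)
Your proof is correct, but it takes a genuinely different route from the paper's. The paper's engine is Lemma \ref{diademX}, the inequality $|\nucleus (G)| + |\diadem (G)| \le |\nucleus (G[X])| + |\diadem (G[X])|$, which it proves by building a matching from $\nucleus (G) \setminus \nucleus (G[X])$ into $\diadem (G[X]) \setminus \diadem (G)$ using $M$-alternating and $M$-augmenting paths (with the Hall-type Lemma \ref{misneigh} as an auxiliary); combining that inequality with Theorem \ref{kenucdiadem} applied to the \ke graph $G[X]$ gives $2\alpha (G) \le 2\alpha (G[X])$ and hence $X^c = \emptyset$. You avoid this machinery entirely: by isolating the boundary set $Y$ and proving, via a Hall's-theorem matching of $N(I)$ into $I$, that the maximum critical independent sets of $G$ are exactly the maximum independent sets of the \ke graph $G[X]-Y$, you arrive at the exact identity $|\nucleus (G)| + |\diadem (G)| = 2\alpha (G[X])$ valid for every graph. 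This is strictly stronger than Lemma \ref{diademX} (it shows the paper's inequality always holds with equality), it locates $\nucleus (G)$ and $\diadem (G)$ precisely as $\core (G[X]-Y)$ and $\corona (G[X]-Y)$, and it subsumes the paper's other main result as well: Theorem \ref{conjecture1} drops out immediately since $\alpha (G[X]) \le \alpha (G)$ by Theorem \ref{indepdecomp} $(i)$. Your handling of $(ii) \Rightarrow (i)$ is essentially the paper's argument in contrapositive form ($\diadem (G) \subseteq X$ while $\corona (G)$ meets $X^c$ whenever $X^c \neq \emptyset$), and both proofs rest on the decomposition of Theorem \ref{indepdecomp}. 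The only step you should write out more fully is the chain $d(J) = d(G[X]) = d(G)$: it requires noting that $I$ is a maximum independent set of $G[X]$, hence critical there by Theorem \ref{kemaxcrit}, and that $N_{G[X]}(I) = N_G(I)$; both facts are already in your hands, so this is a matter of presentation rather than a gap.
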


The paper \cite{levit1} gives an upper bound for $\alpha (G)$ in terms of unions and intersections of maximum independent sets, proving 
$$
2\alpha (G) \le |\core (G)| + |\corona (G)|
$$ 
for any graph $G$. It is natural to ask whether a similar lower bound for $\alpha (G)$ can be formulated in terms of unions and intersections of critical independent sets. Jarden, Levit, and Mandrescu in \cite{levit1} conjectured that for any graph $G$, the inequality $|\ker (G)| + |\diadem (G)| \le 2\alpha (G)$ always holds. We will prove a slightly stronger statement. By Theorem \ref{larson} we see that $\ker (G) \subseteq \nucleus (G)$ holds implying that $|\ker (G)| + |\diadem (G)| \le |\nucleus (G)| + |\diadem (G)|$. In section \ref{indsection} we will prove the following statement, resolving the cited conjecture:

\begin{theorem} \label{conjecture1}
For any graph $G$,
$$
|\nucleus (G)| + |\diadem (G)| \le 2\alpha (G).
$$
\end{theorem}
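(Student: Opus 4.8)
The plan is to reduce everything to the \ke graph $G[X]$ supplied by Theorem~\ref{indepdecomp}, where Theorem~\ref{kenucdiadem} already gives an exact identity, and then to account for the gap between $G$ and $G[X]$ by a single matching argument. Write $H = G[X]$. Since $H$ is \ke, Theorem~\ref{kemaxcrit} shows that every maximum independent set of $H$ is critical, so the maximum critical independent sets of $H$ are exactly the members of $\Omega(H)$; hence $\nucleus(H) = \core(H)$, $\diadem(H) = \corona(H)$, and Theorem~\ref{kenucdiadem} reads $|\nucleus(H)| + |\diadem(H)| = 2\alpha(H)$. Because $\alpha(G) = \alpha(H) + \alpha(G[X^c]) \ge \alpha(H)$ by Theorem~\ref{indepdecomp}(i), it suffices to prove
\[
|\nucleus(G)| + |\diadem(G)| \le |\nucleus(H)| + |\diadem(H)|.
\]
(If the empty set is the only critical independent set of $G$, then $\nucleus(G) = \diadem(G) = \emptyset$ and the statement is trivial, so assume there is a nonempty maximum critical independent set.)

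The first real step is to show that every maximum critical independent set of $G$ lies in $\Omega(H)$. For this I would record the elementary lemma that any critical independent set $I$ admits a matching of $N(I)$ into $I$: were Hall's condition to fail there would be $T \subseteq N(I)$ with $|N(T) \cap I| < |T|$, and then $I' = I \setminus N(T)$ would be an independent set with $N(I') \subseteq N(I) \setminus T$, whence $d(I') > d(I) = d(G)$, contradicting criticality. Now if $I$ is a maximum critical independent set of $G$ then $X = I \cup N(I)$ by Theorem~\ref{indepdecomp}(iv), so in $H$ we have $N_H(I) = X \setminus I$, and the lemma produces a matching of $X \setminus I$ into $I$ inside $H$. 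Thus $\mu(H) \ge |X \setminus I| = |X| - |I|$, and since $H$ is \ke this forces $\alpha(H) = |X| - \mu(H) \le |I|$; as $I$ is independent in $H$ we get $|I| = \alpha(H)$. Consequently the maximum critical independent sets of $G$ form a subfamily of $\Omega(H)$, and the two containments $\diadem(G) \subseteq \diadem(H)$ and $\nucleus(G) \supseteq \nucleus(H)$ follow immediately.

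It remains to establish the displayed inequality, which, given these containments, is equivalent to producing an injection
\[
\nucleus(G) \setminus \nucleus(H) \hookrightarrow \diadem(H) \setminus \diadem(G).
\]
This is where the difference between $G$ and $H$ enters. A set $S \in \Omega(H)$ is a maximum critical independent set of $G$ if and only if $d_G(S) = d(G)$; writing $Y = \{x \in X : N_G(x) \cap X^c \ne \emptyset\}$ and using $d_G(S) = d_H(S) - |N_G(S) \cap X^c|$ together with the fact that all members of $\Omega(H)$ share the common difference $d(H)$ (which equals $d(G)$, since the set $I$ above lies in $\Omega(H)$ and has no neighbour in $X^c$), this holds exactly when $S \cap Y = \emptyset$. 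Hence the maximum critical independent sets of $G$ are precisely the $Y$-avoiding members of $\Omega(H)$; in particular $Y \cap \diadem(H) \subseteq \diadem(H) \setminus \diadem(G)$, while each $v \in \nucleus(G) \setminus \nucleus(H)$ lies in every $Y$-avoiding maximum independent set of $H$ yet is missed by some member of $\Omega(H)$. The main obstacle is to match these two sets, and I expect to do this through the exchange structure of maximum independent sets in the \ke graph $H$: for $S, S' \in \Omega(H)$ there is a perfect matching between $S \setminus S'$ and $S' \setminus S$, so a vertex forced into the $Y$-avoiding sets can be exchanged, along an $M$-alternating path for a maximum matching $M$ of $H$, for a corona vertex compelled to lie in $Y$. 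Assembling these local exchanges into one global injection, most cleanly by applying Hall's theorem to the bipartite graph between $\nucleus(G) \setminus \nucleus(H)$ and $Y \cap \diadem(H)$, is the crux of the argument and the step I expect to require the most care.
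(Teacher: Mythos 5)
Much of your proposal is sound and runs parallel to the paper's own argument: the reduction to $H=G[X]$ via Theorem~\ref{indepdecomp}, the identity $|\nucleus(H)|+|\diadem(H)|=2\alpha(H)$ from Theorem~\ref{kenucdiadem}, your Hall-condition argument that every maximum critical independent set of $G$ lies in $\Omega(H)$ (which is correct, and more self-contained than the paper's appeal to Theorem~\ref{indepdecomp}), the containments $\diadem(G)\subseteq\diadem(H)$ and $\nucleus(H)\subseteq\nucleus(G)$ (the paper's Lemmas~\ref{diadem} and~\ref{diademG}), and the characterization of the maximum critical independent sets of $G$ as exactly the $Y$-avoiding members of $\Omega(H)$. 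You also correctly reduce the theorem to exhibiting an injection of $A=\nucleus(G)\setminus\nucleus(H)$ into $\diadem(H)\setminus\diadem(G)$. But that injection is precisely where the substance of the theorem lies---it is the paper's Lemma~\ref{diademX}, proved by a maximum-matching and augmenting-path argument resting on the Hall-type Lemma~\ref{misneigh}---and you do not prove it: you only sketch a plan and yourself flag it as the step requiring the most care. As written, the proposal is an unfinished outline of the same strategy.

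Moreover, the plan you sketch for that step cannot succeed, because your proposed target $Y\cap\diadem(H)$ can be strictly too small to receive an injection from $A$. Concretely, let $G$ consist of a four-cycle on $1,2,3,4$, a triangle on $5,6,7$, and the bridging edge $\{2,5\}$. Here $d(G)=0$ and the unique nonempty critical independent set is $\{1,3\}$, so $I=\{1,3\}$, $X=\{1,2,3,4\}$, $H=G[X]=C_4$, and $Y=\{2\}$. Then $\nucleus(G)=\diadem(G)=\{1,3\}$, $\nucleus(H)=\emptyset$, and $\diadem(H)=\{1,2,3,4\}$, so $A=\{1,3\}$ has two elements while $Y\cap\diadem(H)=\{2\}$ has only one; no injection into $Y\cap\diadem(H)$ exists, and Hall's condition fails outright in your bipartite graph since $N(\{1,3\})\cap Y=\{2\}$. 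The point you miss is that a vertex can lie in $\diadem(H)\setminus\diadem(G)$ without lying in $Y$: here vertex $4$ is excluded from $\diadem(G)$ not because it touches $X^c$, but because the only member of $\Omega(H)$ containing it, namely $\{2,4\}$, meets $Y$. So the exchange argument must be aimed at all of $\diadem(H)\setminus\diadem(G)$ (in the example, $\{2,4\}$, which does admit the matching $1\mapsto 2$, $3\mapsto 4$); organizing those exchanges is exactly what the paper's Lemma~\ref{misneigh} and the alternating-path argument inside Lemma~\ref{diademX} accomplish, and it is the part your proposal leaves both misdirected and unproved.
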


\noindent It would be interesting to know whether the sets $\nucleus (G)$ and $\diadem (G)$, or their sizes, can be computed in polynomial time. 

\section{Some structural lemmas}

Here we prove several crucial lemmas which will be needed in our proofs. Our results hinge upon the structure of the set $X$ as described in Theorem \ref{indepdecomp}.

\begin{lemma} \label{diadem}
Let $I$ be a maximum critical independent set in $G$ and set $X = I \cup N(I)$. Then $\diadem (G) \cup N(\diadem (G)) = X$.
\end{lemma}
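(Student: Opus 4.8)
The plan is to prove the two inclusions separately, with the key leverage coming from part $(iv)$ of Theorem \ref{indepdecomp}. The crucial observation is that the set $X$ is intrinsic to $G$: although it is defined here via the particular maximum critical independent set $I$, condition $(iv)$ guarantees that $S \cup N(S) = X$ for \emph{every} maximum critical independent set $S$, so the choice of $I$ used to define $X$ is irrelevant.

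For the inclusion $\diadem(G) \cup N(\diadem(G)) \subseteq X$, I would take an arbitrary maximum critical independent set $S$ and invoke Theorem \ref{indepdecomp}$(iv)$ to write $S \cup N(S) = X$, so that both $S \subseteq X$ and $N(S) \subseteq X$. Since $\diadem(G)$ is by definition the union of all such $S$, taking the union over all maximum critical independent sets yields $\diadem(G) \subseteq X$. For the neighborhood term I would use the elementary identity $N\left(\bigcup_S S\right) = \bigcup_S N(S)$, which shows $N(\diadem(G)) = \bigcup_S N(S) \subseteq X$. Combining the two gives the first inclusion.

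For the reverse inclusion $X \subseteq \diadem(G) \cup N(\diadem(G))$, I would simply note that $I$ is itself one of the maximum critical independent sets whose union defines $\diadem(G)$, so $I \subseteq \diadem(G)$ and hence $N(I) \subseteq N(\diadem(G))$. Therefore $X = I \cup N(I) \subseteq \diadem(G) \cup N(\diadem(G))$, completing the argument.

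I do not expect a serious obstacle here: once Theorem \ref{indepdecomp}$(iv)$ is in hand the statement follows almost formally. The only point requiring care is to apply condition $(iv)$ to \emph{all} maximum critical independent sets rather than only to $I$, which is precisely what converts the definition of $\diadem(G)$ as an arbitrary union into a subset of the single set $X$, while the reverse inclusion uses only that $I$ itself appears in that union.
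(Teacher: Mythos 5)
Your proof is correct and takes essentially the same approach as the paper: both arguments rest on Theorem \ref{indepdecomp}$(iv)$ (the fact that $S \cup N(S) = X$ for \emph{every} maximum critical independent set $S$) combined with the definition of $\diadem(G)$ as the union of all such sets. The paper compresses this into a one-line ``follows by definition,'' whereas you spell out the two inclusions explicitly, but the underlying idea is identical.
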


\begin{proof}
By Theorem \ref{indepdecomp} the set $X$ is unique in $G$, that is, for any maximum critical independent set $S$, $X = S\cup N(S)$. Then $\diadem (G) = X$ follows by definition.
\end{proof}

\begin{lemma} \label{diademG}
Let $I$ be a maximum critical independent set in $G$ and set $X = I \cup N(I)$. Then $\diadem (G) \subseteq \diadem (G[X])$ and $\nucleus (G[X]) \subseteq \nucleus (G)$.
\end{lemma}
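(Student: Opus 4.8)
The plan is to reduce both inclusions to a single structural fact: every maximum critical independent set $S$ of $G$ is in fact a maximum independent set of $G[X]$, and hence (since $G[X]$ is \ke by Theorem \ref{indepdecomp}$(ii)$, so that every maximum independent set of $G[X]$ is critical by Theorem \ref{kemaxcrit}, and a maximum independent set is critical of largest possible cardinality) a maximum critical independent set of $G[X]$. By Lemma \ref{diadem} we have $\diadem(G) \subseteq X$, so every maximum critical independent set $S$ of $G$ satisfies $S \subseteq \diadem(G) \subseteq X$ and is therefore independent in $G[X]$. Since all maximum critical independent sets of $G$ share the common cardinality $|I|$, it will suffice to prove $\alpha(G[X]) = |I|$; then each such $S$ is an independent set of $G[X]$ of size $\alpha(G[X])$, i.e. a maximum independent set of $G[X]$.

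To compute $\alpha(G[X])$, I would invoke the result of Butenko and Trukhanov quoted in the introduction: the critical independent set $I$ is contained in some maximum independent set $A$ of $G$, so $I \subseteq A$ and $|A| = \alpha(G)$. Splitting $A$ across the decomposition and using the additivity of Theorem \ref{indepdecomp}$(i)$, the identity $|A \cap X| + |A \cap X^c| = |A| = \alpha(G) = \alpha(G[X]) + \alpha(G[X^c])$, together with the bounds $|A \cap X| \le \alpha(G[X])$ and $|A \cap X^c| \le \alpha(G[X^c])$, forces $|A \cap X| = \alpha(G[X])$. It then remains to identify $A \cap X$ with $I$: because $A$ is independent and $I \subseteq A$, no vertex of $N(I)$ can lie in $A$, so $A \cap N(I) = \emptyset$ and hence $A \cap X = A \cap (I \cup N(I)) = I$. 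Therefore $\alpha(G[X]) = |A \cap X| = |I|$, which is the key equality.

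With this in hand both inclusions follow quickly. For the first, every maximum critical independent set $S$ of $G$ is now seen to be a maximum critical independent set of $G[X]$, so $S \subseteq \diadem(G[X])$; taking the union over all such $S$ gives $\diadem(G) \subseteq \diadem(G[X])$. For the second, the family of maximum critical independent sets of $G$ is contained in the family of maximum critical independent sets of $G[X]$, so intersecting over the larger family can only shrink the result, giving $\nucleus(G[X]) \subseteq \nucleus(G)$.

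I expect the main obstacle to be the identity $\alpha(G[X]) = |I|$, and specifically the step $A \cap X = I$: one must be sure that extending $I$ to a maximum independent set $A$ of $G$ cannot pick up any vertex of $N(I)$ lying inside $X$, which is exactly what independence of $A$ guarantees. Once $\alpha(G[X]) = |I|$ is established, the passage between ``maximum critical independent set of $G$'' and ``maximum independent set of $G[X]$'' is purely a matter of the \ke property of $G[X]$ via Theorem \ref{kemaxcrit}, and the two inclusions are then immediate set-theoretic consequences.
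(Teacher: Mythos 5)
Your proposal is correct and takes essentially the same route as the paper: show that every maximum critical independent set of $G$ is a maximum (hence, by Theorem \ref{kemaxcrit} and the \ke property of $G[X]$, a maximum critical) independent set of $G[X]$, and then deduce both inclusions as immediate set-theoretic consequences of the containment of one family of sets in the other. The only difference is that you spell out a justification of the key step $\alpha(G[X]) = |I|$ (via the Butenko--Trukhanov extension of $I$ to a maximum independent set of $G$ together with the additivity in Theorem \ref{indepdecomp}$(i)$), a step the paper compresses into the single phrase ``Using Theorem \ref{indepdecomp} we see that $S$ is a maximum independent set in $G[X]$.''
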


\begin{proof}
Let $S$ be a maximum critical independent set in $G$. Using Theorem \ref{indepdecomp} we see that $S$ is a maximum independent set in $G[X]$ and also $G[X]$ is a \ke graph. Then Theorem \ref{kemaxcrit} gives that $S$ must also be critical in $G[X]$, which implies that $\diadem (G) \subseteq \diadem (G[X])$.

Now let $v\in \nucleus (G[X])$. Then $v$ belongs to every maximum critical indepedent set in $G[X]$. As remarked above, since every maximum critical independent set in $G$ is also a maximum critical independent set in $G[X]$, then $v$ belongs to every maximum critical independent set in $G$. This shows that $v\in \nucleus (G)$ and $\nucleus (G[X]) \subseteq \nucleus (G)$ follows.
\end{proof}

\begin{lemma} \label{misneigh}
Suppose $I$ is a non-empty maximum critical independent set in $G$, set $X = I\cup N(I)$, let $A = \nucleus (G) \setminus \nucleus (G[X])$, and let $S$ be a maximum independent set in $G[X]$. For $S'\subseteq S\cap N(A)$, if there exists $A'\subseteq A$ such that $N(A')\cap S \subseteq S'$, then $|S'| \ge |A'|$.
\end{lemma}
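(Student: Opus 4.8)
The plan is to read the conclusion $|S'|\ge |A'|$ as a deficiency (Hall-type) inequality for matching $A'$ into $S$ inside the \ke graph $G[X]$, and to produce it by a direct swap-and-extremality argument rather than by a black-box appeal to Hall's theorem. First I would record the ambient facts that make every relevant neighbourhood an internal quantity of $G[X]$: since $A\subseteq \nucleus(G)\subseteq I$ and, by Theorem \ref{indepdecomp}$(iv)$, $N_G(I)=X\setminus I\subseteq X$, we have $N_G(A')=N_{G[X]}(A')$ for every $A'\subseteq A$. Moreover, by Lemma \ref{diademG} and Theorem \ref{kemaxcrit}, $S$ is a maximum independent set of the \ke graph $G[X]$, and every maximum independent set of $G[X]$ is critical, so $\nucleus(G[X])=\core(G[X])$.

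The core computation is the swap. Given $S'\subseteq S\cap N(A)$ and $A'\subseteq A$ with $N(A')\cap S\subseteq S'$, set $R=(S\setminus S')\cup A'$. Any edge from $A'$ to $S\setminus S'$ would exhibit a vertex of $N(A')\cap S$ lying outside $S'$, so no such edge exists; since $S\setminus S'$ and $A'\subseteq I$ are each independent, $R$ is independent in $G[X]$. Because $S$ is maximum we have $|R|\le|S|$, and expanding $|R|=|S|-|S'|+|A'|-|A'\cap S|$ (using $A'\cap S'=\emptyset$, which holds because $A'\subseteq A$ is disjoint from $N(A)\supseteq S'$) yields at once the bound $|S'|\ge |A'|-|A'\cap S|=|A'\setminus S|$.

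The hard part is absorbing the term $|A'\cap S|$, that is, handling the vertices of $A'$ that already lie in $S$; when $A'\cap S=\emptyset$ the swap is already decisive, so the real work is either to reduce to this case or to pay for it. Here I would lean on the defining property of $A$: each $v\in A$ lies in every maximum critical independent set of $G$, yet, being outside $\core(G[X])=\nucleus(G[X])$, is avoided by some maximum independent set of $G[X]$. Comparing critical differences through Theorem \ref{indepdecomp} identifies the maximum critical independent sets of $G$ as exactly those maximum independent sets of $G[X]$ having no neighbour in $X^c$; I would use this characterization, together with a matching of $X\setminus S$ into $S$ supplied by the \ke structure of $G[X]$ and the $M$-alternating paths it generates, to exchange each vertex of $A'\cap S$ for a distinct neighbour in $S$ and thereby upgrade the swap bound to $|S'|\ge|A'|$. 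Converting the global membership statement ``$A\subseteq\nucleus(G)$'' into this local, simultaneous neighbourhood count inside the single set $S$ is the step I expect to be the main obstacle.
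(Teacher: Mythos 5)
Your second paragraph is, in substance, the paper's entire proof of this lemma: the paper forms the same set $A'\cup (S\setminus S')$, observes it is independent in $G[X]$, and concludes that if $|S'|<|A'|$ this set would be larger than $S$. The one difference is that you track the overlap term $|A'\cap S|$ and the paper does not: the paper's size count is valid only when $A'\cap(S\setminus S')=\emptyset$, and since $A'\cap S'=\emptyset$ holds automatically (as you note, $S'\subseteq N(A)$ is disjoint from the independent set $A$), that unstated assumption is exactly $A'\cap S=\emptyset$. So your honest conclusion $|S'|\ge |A'\setminus S|$ is what the swap really proves, and the obstacle you flag in your third paragraph is a genuine gap in the paper's own argument, not a defect of yours.

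However, the completion you are attempting in the third paragraph is impossible, because the lemma as stated is false whenever $A\neq\emptyset$. Indeed, $A\subseteq\nucleus(G)\subseteq I$, and $I$ is a maximum independent set of $G[X]$; taking $S=I$, $A'=A$, $S'=\emptyset$ gives $N(A')\cap S=\emptyset\subseteq S'$ (as $S$ is independent and $A'\subseteq S$), yet $|S'|=0<|A'|$. The paper's own Figure \ref{ex1} supplies a concrete instance: for $G_2$ the maximum critical independent sets are $\{a,b,c,d\}$ and $\{a,b,d,f\}$, so $\nucleus(G_2)=\{a,b,d\}$; with $I=\{a,b,c,d\}$ one gets $X=\{a,b,c,d,e,f,g\}$, the maximum independent sets of $G[X]$ are $\{a,b,c,d\}$, $\{a,b,c,g\}$, $\{a,b,d,f\}$, hence $\nucleus(G[X])=\{a,b\}$ and $A=\{d\}$; now $S=\{a,b,c,d\}$, $A'=\{d\}$, $S'=\emptyset$ satisfies all hypotheses ($N(d)\cap S=\{g\}\cap S=\emptyset$) but violates the conclusion. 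So stop trying to ``pay for'' $|A'\cap S|$; instead the hypothesis must be strengthened. The repair the paper actually needs is to assume $A'\cap S=\emptyset$ (equivalently $A'\subseteq A\cap N(S)$, or weaken the conclusion to $|S'|\ge|A'\setminus S|$): in the only application, inside the proof of Lemma \ref{diademX}, the lemma is invoked with $A'=V(\mathcal{P})\cap A$, a set of vertices of alternating paths in the bipartite graph on $(A\cap N(S), S\cap N(A))$, which is automatically disjoint from $S$. Under that added hypothesis your swap computation gives $|S'|\ge|A'\setminus S|=|A'|$ and is a complete, correct proof; your third paragraph should simply be deleted.
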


\begin{proof}
For $S' \subseteq S\cap N(A)$ suppose such an $A'$ exists. For sake of contradiction, suppose that $|S'| < |A'|$. Since $A' \subseteq \nucleus (G)$, then $A'$ is an independent set. Also since $A' \subseteq \nucleus (G) \subseteq \diadem (G)$, by Lemma \ref{diadem} we have $A' \subseteq X$. Furthermore, since $N(A')\cap S \subseteq S'$ then $A'\cup (S \setminus S')$ is an independent set in $G[X]$. Now by assumption $|S'| < |A'|$, so $A'\cup (S \setminus S')$ is an independent set in $G[X]$ larger than $S$, which cannot happen. Therefore we must have $|S'| \ge |A'|$ as desired.
\end{proof}

\begin{lemma} \label{diademX}
Let $I$ be a maximum critical independent set in $G$ and set $X = I\cup N(I)$. Then 
$$
|\nucleus (G)| + |\diadem (G)| \le |\nucleus (G[X]) | + |\diadem (G[X])|.
$$
\end{lemma}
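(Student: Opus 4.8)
The plan is to leverage the two containments established in Lemma \ref{diademG}, namely $\nucleus(G[X]) \subseteq \nucleus(G)$ and $\diadem(G) \subseteq \diadem(G[X])$, to convert the inequality into a purely combinatorial counting statement. Since $\nucleus(G[X]) \subseteq \nucleus(G)$ gives $|\nucleus(G)| - |\nucleus(G[X])| = |\nucleus(G) \setminus \nucleus(G[X])| = |A|$ (with $A$ the set from Lemma \ref{misneigh}), and $\diadem(G) \subseteq \diadem(G[X])$ gives $|\diadem(G[X])| - |\diadem(G)| = |\diadem(G[X]) \setminus \diadem(G)|$, the desired bound is equivalent to $|A| \le |\diadem(G[X]) \setminus \diadem(G)|$. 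If $I = \emptyset$ all the relevant sets are empty and there is nothing to prove, so I would assume $I \neq \emptyset$ and adopt the hypotheses of Lemma \ref{misneigh}.

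The heart of the argument is to produce an injection from $A$ into $\diadem(G[X]) \setminus \diadem(G)$, which I would realize as a matching that saturates $A$. First I would record a domination observation: every vertex adjacent to $A$ is excluded from $\diadem(G)$. Indeed, if $w \in N(A)$ then $w$ has a neighbor $a \in A \subseteq \nucleus(G)$; since $a$ lies in \emph{every} maximum critical independent set of $G$ and these sets are independent, no maximum critical independent set can contain $w$, so $w \notin \diadem(G)$. Because $G[X]$ is a \ke graph, Theorem \ref{kediadem}$(i)$ gives $\diadem(G[X]) = \corona(G[X])$; hence for any maximum independent set $S$ of $G[X]$ we have $S \cap N(A) \subseteq \corona(G[X]) \setminus \diadem(G) = \diadem(G[X]) \setminus \diadem(G)$. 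Thus any matching saturating $A$ using edges into such an $S \cap N(A)$ is exactly the injection we want, yielding $|A| \le |S \cap N(A)| \le |\diadem(G[X]) \setminus \diadem(G)|$.

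To build the matching I would fix a maximum independent set $S$ of $G[X]$ that is \emph{disjoint} from $A$. Granting such an $S$, Lemma \ref{misneigh} applied with $S' = N(A') \cap S$ immediately yields Hall's condition $|N(A') \cap S| \ge |A'|$ for every $A' \subseteq A$, so Hall's theorem produces a matching of $A$ into $S \cap N(A)$ saturating $A$, and the previous paragraph finishes the proof. The existence of an $S$ disjoint from $A$ is plausible because $A$ is independent (being a subset of $\nucleus(G)$) and $A \cap \core(G[X]) = \emptyset$: since $\core(G[X]) \subseteq \nucleus(G[X])$ and $A \cap \nucleus(G[X]) = \emptyset$ by definition, no vertex of $A$ belongs to every maximum independent set of $G[X]$, so each vertex of $A$ can be individually omitted.

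The main obstacle is precisely this last point: promoting the individual omissions into a \emph{simultaneous} one, i.e. finding a single maximum independent set of $G[X]$ avoiding all of $A$ at once (equivalently, showing $\alpha(G[X] - A) = \alpha(G[X])$, or that $A$ is contained in a minimum vertex cover of $G[X]$). Disjointness is genuinely necessary here, since a vertex $a \in A \cap S$ would have $N(\{a\}) \cap S = \emptyset$, violating Hall's condition for $A' = \{a\}$ and blocking the application of Lemma \ref{misneigh}. I expect to establish it by an alternating-path exchange inside the \ke graph $G[X]$: starting from any maximum independent set and repeatedly swapping along the components of its symmetric difference with maximum independent sets that omit individual vertices of $A$, using the independence of $A$ to guarantee that these swaps can be carried out without reintroducing other vertices of $A$.
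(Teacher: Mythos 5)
Your reduction of the lemma to the inequality $|A| \le |\diadem(G[X]) \setminus \diadem(G)|$ and your observation that no vertex of $N(A)$ can lie in $\diadem(G)$ are both correct and match the paper's strategy. The genuine gap is the step you deferred to the end: a maximum independent set $S$ of $G[X]$ disjoint from $A$ need \emph{not} exist, so no alternating-path exchange can produce one. Concretely, let $G$ consist of a path $a_1xya_2$, a cycle $z_1z_2z_3z_4z_5$, and the two edges $xz_1$, $yz_2$. Any independent set of $G$ meeting the cycle has negative difference (every non-empty independent set $T$ of $C_5$ satisfies $|N(T)|>|T|$, while differences of independent sets inside the path are at most $0$, and the two neighborhoods are disjoint), and any independent subset of the path meeting $\{x,y\}$ also has negative difference in $G$ because of the connector edges. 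Hence $d(G)=0$ and the critical independent sets of $G$ are exactly $\emptyset$, $\{a_1\}$, $\{a_2\}$, $\{a_1,a_2\}$; so $I=\{a_1,a_2\}$ is the unique maximum critical independent set, $X=\{a_1,x,y,a_2\}$, and $G[X]$ is a $P_4$. Then $\nucleus(G)=\{a_1,a_2\}$, while $\nucleus(G[X])=\emptyset$ since the three maximum independent sets $\{a_1,y\}$, $\{a_1,a_2\}$, $\{x,a_2\}$ of the $P_4$ have empty intersection; thus $A=\{a_1,a_2\}$, and every maximum independent set of $G[X]$ meets $A$ (equivalently, $\alpha(G[X]-A)=1<\alpha(G[X])=2$). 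Worse, your entire single-$S$ framework collapses here, not just the disjointness claim: the only injection of $A$ into $\diadem(G[X])\setminus\diadem(G)=\{x,y\}$ matches $a_1$ to $x$ and $a_2$ to $y$, and no single maximum independent set of $G[X]$ contains both $x$ and $y$, since they are adjacent.

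This is precisely why the paper does not fix one maximum independent set. It takes a maximum matching $M$ from $A$ into the \emph{union} $\diadem(G[X])\setminus\diadem(G)$, whose vertices may be contributed by different maximum independent sets, and proves that $M$ saturates $A$ by an augmenting-path argument: for an unsaturated $v\in A$ it chooses $S$ to be a maximum independent set containing a particular neighbor $u\in\diadem(G[X])\setminus\diadem(G)$ of $v$, considers the $M$-alternating paths in the bipartite graph on $(A\cap N(S),\,S\cap N(A))$, and applies Lemma \ref{misneigh} only to the set $V(\mathcal{P})\cap A$, which is automatically disjoint from $S$ because it lies in $N(S)$. If you wanted to keep a Hall-type formulation, the condition you would actually need is a Hall condition for the bipartite graph between $A$ and all of $\diadem(G[X])\setminus\diadem(G)$, namely $|N(A')\cap(\diadem(G[X])\setminus\diadem(G))|\ge|A'|$ for every $A'\subseteq A$; verifying this forces you to compare $A'$ against several maximum independent sets simultaneously, which is essentially what the paper's alternating-path argument does.
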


\begin{proof}
First note that if the set $X$ is empty, then by Lemma \ref{diadem} both sides of the inequality are zero. So let us assume that $X$ is non-empty. Now consider the set $A = \nucleus (G) \setminus \nucleus (G[X])$. If this independent set is empty, then $\nucleus (G) = \nucleus (G[X])$ and there is nothing to prove since $\diadem (G) \subseteq \diadem (G[X])$ holds by Lemma \ref{diademG}. If $A$ is non-empty, for each $v\in A$ there is some maximum independent set $S$ of $G[X]$ which doesn't contain $v$. Since $S$ is a maximum independent set there exists $u \in N(v)\cap S$. Since $v\in \nucleus (G)$, then $u$ does not belong to any maximum critical independent set in $G$. Recall by Theorem \ref{indepdecomp} $(ii)$ $G[X]$ is a \ke graph, so Theorem \ref{kemaxcrit} gives that $S$ is a maximum critical independent set in $G[X]$. It follows that $u\in \diadem (G[X]) \setminus \diadem (G)$, which shows each vertex in $A$ is adjacent to at least one vertex in $\diadem (G[X]) \setminus \diadem (G)$.

Now we will show there is a maximum matching from $A$ into $\diadem (G[X])\setminus \diadem (G)$ with size $|A|$. For sake of contradiction, suppose such a matching $M$ has less than $|A|$ edges. Then there exists some vertex $v \in A$ not saturated by $M$. By the above, $v$ is adjacent to some vertex $u \in \diadem (G[X]) \setminus \diadem (G)$. Since $M$ is maximum, $u$ is matched to some vertex $w \in A$ under $M$. Now let $S$ be a maximum independent set of $G[X]$ containing $u$. We now restrict ourselves to the subgraph induced by the edges $(A\cap N(S), S\cap N(A))$, noting this subgraph is bipartite since both $A\cap N(S)$ and $S\cap N(A)$ are independent. In this subgraph, consider the set $\mathcal{P}$ of all $M$-alternating paths starting with the edge $vu$. Note that all such paths must start with the vertices $v,u$, then $w$. Also, such paths must end at either a matched vertex in $A\cap N(S)$ or an unmatched vertex in $S\cap N(A)$. 

We wish to show that there is some alternating path ending at an unmatched vertex in $S\cap N(A)$. For sake of contradiction, suppose all alternating paths end at a matched vertex in $A\cap N(S)$ and let $V(\mathcal{P})$ denote the union of all vertices belonging to such an alternating path. We aim to show this scenario contradicts Lemma \ref{misneigh}. Now clearly we must have $N(V(\mathcal{P})\cap A)\cap S \subseteq V(\mathcal{P})\cap S$, else we could extend an alternating path to any vertex in $(N(V(\mathcal{P})\cap A)\cap S) \setminus (V(\mathcal{P})\cap S)$. Also, since all paths in $\mathcal{P}$ end at a matched vertex in $A\cap N(S)$, then every vertex of $V(\mathcal{P})\cap S$ is matched under $M$, and such a situation should look as in Figure \ref{proofpic}. 

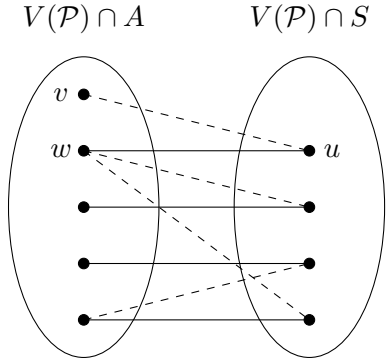
\begin{figure}[h]
\begin{center}
\begin{tikzpicture}[scale=1]

\draw (0,0) circle [x radius=1cm, y radius=2cm];
\draw (3,0) circle [x radius=1cm, y radius=2cm];

\draw (0,1.5) circle (2pt) [fill=black];
\draw (3,.75) circle (2pt) [fill=black];
\draw (0,.75) circle (2pt) [fill=black];
\draw (3,0) circle (2pt) [fill=black];
\draw (3,-.75) circle (2pt) [fill=black];
\draw (3,-1.5) circle (2pt) [fill=black];
\draw (0,0) circle (2pt) [fill=black];
\draw (0,-.75) circle (2pt) [fill=black];
\draw (0,-1.5) circle (2pt) [fill=black];

\draw (-.3,1.5) node {$v$};
\draw (-.3,.75) node {$w$};
\draw (3.3,.75) node {$u$};
\draw (0,2.5) node {$V(\mathcal{P})\cap A$};
\draw (3,2.5) node {$V(\mathcal{P})\cap S$};

\draw [dashed] (0,1.5) -- (3,.75);
\draw [dashed] (0,.75) -- (3,0);
\draw [dashed] (0,.75) -- (3,-1.5);
\draw [dashed] (0,-1.5) -- (3,-.75);
\draw (0,.75) -- (3,.75);
\draw (0,0) -- (3,0);
\draw (0,-.75) -- (3,-.75);
\draw (0,-1.5) -- (3,-1.5);

\end{tikzpicture}
\end{center}
\caption{What the $M$-alternating paths could look like between $V(\mathcal{P})\cap A$ and $V(\mathcal{P})\cap S$, where solid lines represent matched edges in $M$ and dotted lines represent the unmatched edges.}
\label{proofpic}
\end{figure}

\noindent From this it follows that $|V(\mathcal{P})\cap S| < |V(\mathcal{P})\cap A|$. The previous statements exactly contradict Lemma \ref{misneigh}, so there is some alternating path $P$ ending at an unmatched vertex $x \in S\cap N(A)$. This means that $P$ is an $M$-augmenting path. A well-known theorem in graph theory states that a matching is maximum in $G$ if, and only if, there is no augmenting path \cite{west}. So $P$ being an $M$-augmenting path contradicts our assumption that $M$ is a maximum matching.

Therefore there is a matching $M$ from $A$ into $\diadem (G[X]) \setminus \diadem (G)$. This matching implies that $|\nucleus (G) \setminus \nucleus (G[X])| \le |\diadem (G[X]) \setminus \diadem (G)|$. Since both $\nucleus (G[X]) \subseteq \nucleus (G)$ and $\diadem (G) \subseteq \diadem (G[X])$ by Lemma \ref{diademG}, the lemma follows.
\end{proof}

\section{New characterizations of \ke graphs}

\begin{proof}[Proof (of Theorem \ref{keconjecture})]
First we prove $(ii) \Rightarrow (i)$. Suppose that $\diadem (G) = \corona (G)$ holds and let $I$ be a maximum critical independent set with $X = I \cup N(I)$. We will use the decomposition in Theorem \ref{indepdecomp} to show that $X^c$ must be empty and hence, $G=G[X]$ is a \ke graph. By Lemma \ref{diadem} we have $\corona (G) = \diadem (G) \subseteq X$, in other words every maximum independent set in $G$ is contained in $X$. This implies that $|I| = \alpha (G[X]) = \alpha (G)$. Now by Theorem \ref{indepdecomp} $(i)$, $\alpha (G) = \alpha (G[X]) + \alpha (G[X^c])$ showing that we must have $\alpha (G[X^c])=0$. Now clearly the result follows, since $\alpha (G[X^c])=0$ implies that $X^c$ must be empty.

To prove $(iii) \Rightarrow (i)$, again we will use the decomposition in Theorem \ref{indepdecomp} to show that $X^c$ must be empty and hence, $G$ is a \ke graph. So suppose that $|\diadem (G)| + |\nucleus (G)| = 2\alpha (G)$ and let $I$ be a maximum critical independent set in $G$ with $X = I\cup N(I)$. Lemma \ref{diademX} implies that 
$$
2\alpha (G) = |\diadem (G)| + |\nucleus (G)| \le |\diadem (G[X])| + |\nucleus (G[X])|.
$$
Theorem \ref{indepdecomp} $(ii)$ gives that $G[X]$ is \ke, so by Corollary \ref{kenucdiadem} we have $|\diadem (G[X])| + |\nucleus (G[X])|=2\alpha (G[X])$ implying that $\alpha (G) \le \alpha (G[X])$. It follows by Theorem \ref{indepdecomp} $(i)$ we must have $\alpha (G) = \alpha (G[X])$, so again we know that $\alpha (G[X^c])=0$ which finishes this part of the proof.

The implications $(i) \Rightarrow (ii)$ and $(i) \Rightarrow (iii)$ are given in Theorem \ref{kediadem} and in Theorem \ref{kenucdiadem}.
\end{proof}

\section{A bound on $\alpha (G)$}\label{indsection}

\begin{proof}[Proof (of Theorem \ref{conjecture1})]
Let $I$ be a maximum critical independent set in $G$ and $X=I\cup N(I)$. By Theorem \ref{indepdecomp} $(ii)$, $G[X]$ is a \ke graph so by Theorem \ref{kenucdiadem} we have
$$
|\nucleus (G[X])| + |\diadem (G[X])| = 2\alpha(G[X]) \le 2\alpha (G).
$$
Now by Lemma \ref{diademX} we must have 
$$
|\nucleus (G)| + |\diadem (G)| \le |\nucleus (G[X])| + |\diadem (G[X])|
$$
and the theorem follows.
\end{proof}

Combining Theorem \ref{conjecture1} and the inequality $2\alpha (G) \le |\core (G)| + |\corona (G)|$ proven in \cite{levit1}, the following corollary is immediate.

\begin{corollary}
For any graph $G$,
$$
|\nucleus (G)| + |\diadem (G)| \le 2\alpha (G) \le |\core (G)| + |\corona (G)|.
$$
\end{corollary}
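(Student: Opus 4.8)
The corollary is the concatenation of two inequalities, so the plan is to secure each endpoint separately and then join them by transitivity. The left-hand inequality $|\nucleus(G)| + |\diadem(G)| \le 2\alpha(G)$ is precisely Theorem \ref{conjecture1}, already proved in this section: one passes to $X = I \cup N(I)$ for a maximum critical independent set $I$, invokes Theorem \ref{indepdecomp}$(ii)$ to see that $G[X]$ is a \ke graph, applies Theorem \ref{kenucdiadem} to get $|\nucleus(G[X])| + |\diadem(G[X])| = 2\alpha(G[X]) \le 2\alpha(G)$, and then transports the left side from $G[X]$ back to $G$ using Lemma \ref{diademX}. I would simply quote this. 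The right-hand inequality $2\alpha(G) \le |\core(G)| + |\corona(G)|$ is the result of \cite{levit1} recalled immediately before the corollary, which I am entitled to use. With both halves in hand the chain $|\nucleus(G)| + |\diadem(G)| \le 2\alpha(G) \le |\core(G)| + |\corona(G)|$ is immediate, so in this sense the corollary carries no obstacle of its own beyond assembling the two known bounds.

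Should one prefer a self-contained treatment of the right-hand half rather than citing \cite{levit1}, here is the route I would take. Fix a maximum independent set $S$, so that $\core(G) \subseteq S \subseteq \corona(G)$ and hence $|\core(G)| = |S| - |S \setminus \core(G)|$ and $|\corona(G)| = |S| + |\corona(G) \setminus S|$. Adding these and using $|S| = \alpha(G)$, the desired inequality $|\core(G)| + |\corona(G)| \ge 2\alpha(G)$ reduces to the single combinatorial claim $|\corona(G) \setminus S| \ge |S \setminus \core(G)|$. To prove this I would exhibit a matching of $G$ saturating $S \setminus \core(G)$ all of whose edges run into $\corona(G) \setminus S$; since $S \setminus \core(G)$ is independent as a subset of $S$, no edge of such a matching can stay inside $S \setminus \core(G)$, so the matching injects $S \setminus \core(G)$ into $\corona(G) \setminus S$ and delivers the inequality.

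The existence of this matching is the one genuinely nontrivial point, and it is where I expect the main difficulty. I would establish it by verifying Hall's condition on the bipartite graph $B$ whose parts are $S \setminus \core(G)$ and $\corona(G) \setminus S$ and whose edges are the edges of $G$ between these parts; concretely, for every $T \subseteq S \setminus \core(G)$ one must show $|N_G(T) \cap (\corona(G) \setminus S)| \ge |T|$. The mechanism is an exchange argument on maximum independent sets: each vertex of $T$ lies outside some maximum independent set, and the symmetric difference of any such set with $S$ induces a bipartite subgraph that admits a perfect matching between its two sides (otherwise a Hall deficiency there would build an independent set larger than $S$). Combining this alternating structure should force at least $|T|$ neighbors of $T$ into $\corona(G) \setminus S$, so that a failure of Hall's condition for $B$ would likewise yield an independent set exceeding $\alpha(G)$, a contradiction. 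Making this symmetric-difference bookkeeping precise is the crux; once the matching is in place, the right-hand inequality follows, and chaining it with Theorem \ref{conjecture1} completes the corollary.
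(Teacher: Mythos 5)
Your first paragraph is exactly the paper's proof: the corollary is obtained by chaining Theorem \ref{conjecture1} with the inequality $2\alpha(G) \le |\core(G)| + |\corona(G)|$ quoted from \cite{levit1}, which is all the paper does. Your optional self-contained Hall-condition argument for the right-hand bound is admittedly left as a sketch, but it is unnecessary here since the citation suffices, so the assembled two-citation argument stands as a complete and correct proof matching the paper's.
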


\noindent These upper and lower bounds are quite interesting. The fact that every critical independent set is contained in a maximum independent set implies that $\diadem (G) \subseteq \corona (G)$ for all graphs $G$. However, the graph $G_2$ in Figure \ref{ex1} has $\core (G_2) \subsetneq \nucleus (G_2)$ while the graph $G$ in Figure \ref{ex2} has $\nucleus (G) = \{a,b,c\} \subsetneq \core (G) = \{a,b,c,h\}$.

\section{Acknowledgements}
Many thanks to my advisor L\'{a}szl\'{o} Sz\'{e}kely for feedback on initial versions of this manuscript. Partial support from the NSF DMS under contract 1300547 is gratefully acknowledged.

\bibliographystyle{amsplain}
\bibliography{ref}
\nocite{*}

\end{document}